\definecolor{gr}{rgb}   {0.,   0.69,   0.23 }
\definecolor{bl}{rgb}   {0.,   0.5,   1. }
\definecolor{mg}{rgb}   {0.85,  0.,    0.85}
\definecolor{yl}{rgb}   {0.8,  0.7,   0.}
\definecolor{or}{rgb}  {0.7,0.2,0.2}
\tikzset{
	dot/.style={circle,fill=black,draw=black,inner sep=0pt,minimum size=0.5mm},
	>=stealth,
	}
\tikzset{
	dot2/.style={circle,fill=black,draw=black,inner sep=0pt,minimum size=0.2mm},
	>=stealth,
	}
\tikzset{
	ddot/.style={circle,fill=black,draw=black,inner sep=0pt,minimum size=0.8mm},
	>=stealth,
	}
\tikzset{decision/.style={ 
        draw,
        diamond,
        aspect=1.5
    }}
\tikzset{dia2/.style
={diamond,fill=white,draw=black,inner sep=0pt,minimum size=1mm},
	>=stealth,
	}
\tikzset{dia/.style
={star,fill=black,draw=black,inner sep=0pt,minimum size=1mm},
	>=stealth,
	}
\tikzset{dia/.style
={diamond,fill=black,draw=black,inner sep=0pt,minimum size=1.3mm},
	>=stealth,
	}
\def\DeclareSymbol#1#2#3{\xsavebox{#1}{\tikz[baseline=#2,scale=0.15]{#3}}}
\def\<#1>{\xusebox{#1}}
\newsavebox{\peA}
\newsavebox{\pneA}
\newsavebox{\plA}
\newsavebox{\pgA}
\newsavebox{\pleA}
\newsavebox{\pgeA}
\newsavebox{\pezA}
\savebox{\peA}{\tikz \draw (0,0) node[shape=circle,draw,inner sep=0pt,minimum size=8.5pt] {\scriptsize  $=$};}
\savebox{\pneA}{\tikz \draw (0,0) node[shape=circle,draw,inner sep=0pt,minimum size=8.5pt] {\footnotesize $\neq$};}
\savebox{\plA}{\tikz \draw (0,0) node[shape=circle,draw,inner sep=0pt,minimum size=8.5pt] {\scriptsize $<$};}
\savebox{\pgA}{\tikz \draw (0,0) node[shape=circle,draw,inner sep=0pt,minimum size=8.5pt] {\scriptsize $>$};}
\savebox{\pleA}{\tikz \draw (0,0) node[shape=circle,draw,inner sep=0pt,minimum size=8.5pt] {\scriptsize $\leqslant$};}
\savebox{\pgeA}{\tikz \draw (0,0) node[shape=circle,draw,inner sep=0pt,minimum size=8.5pt] {\scriptsize $\geqslant$};}
\savebox{\pezA}{\tikz \draw (0,0) node[shape=circle,draw,
fill=white, 
inner sep=0pt,minimum size=8.5pt]{} ;}
\def \peB{\mathchoice
{\scalebox{.7}{{\usebox{\peA}}}}
{\scalebox{.7}{{\usebox{\peA}}}}
{\scalebox{.7}{{\usebox{\peA}}}}
{}
}
\def \pezB{\mathchoice
{\scalebox{.7}{{\usebox{\pezA}}}}
{\scalebox{.7}{{\usebox{\pezA}}}}
{\scalebox{.7}{{\usebox{\pezA}}}}
{}
}
\newcommand{\pe}{\mathbin{{\peB}}}
\newcommand{\pez}{\mathbin{{\pezB}}}
\def\C{\mathbb{C}}
\def\R{\mathbb{R}}
\def\r2n{{\mathbb{R}^{2n}}}
\def\N{\mathbb{N}}
\def\Z{\mathbb{Z}}
\def\supp{\operatorname{supp}}
\tikzset{>=stealth',
         cvertex/.style={circle,draw=black,inner sep=1pt,outer sep=3pt},
         vertex/.style={circle,fill=black,inner sep=1pt,outer sep=3pt},
         star/.style={circle,fill=yellow,inner sep=0.75pt,outer sep=0.75pt},
         tvertex/.style={inner sep=1pt,font=\scriptsize},
         gap/.style={inner sep=0.5pt,fill=white}}
\tikzstyle{mybox} = [draw=black, fill=blue!10, very thick,
\tikzstyle{boxtitle} =[fill=blue!50, text=white,rectangle,rounded corners]
\tikzstyle{decision} = [diamond, draw, fill=blue!20,
\tikzstyle{block} = [rectangle, draw, fill=blue!20,
\tikzstyle{line} = [draw, very thick, color=black!50, -latex']
\tikzstyle{cloud} = [draw, ellipse,fill=red!40,
\tikzstyle{cloud2} = [draw, ellipse,fill=red!30, text=white,text width=10em, node distance=2.5cm, text centered, minimum height=4em]
\tikzstyle{cloud3} = [draw, ellipse, fill=cyan!30,
\tikzstyle{cloud4} = [draw, ellipse,fill=orange!70, node distance=2.5cm,
\tikzstyle{cloud5} = [draw, ellipse,fill=red!20, node distance=2.5cm,
\tikzstyle{cloud6} = [draw, ellipse,fill=red!20, node distance=2.5cm,
\tikzset{
    position/.style args={#1:#2 from #3}{
        at=(#3.#1), anchor=#1+180, shift=(#1:#2)
    }
}
\newtheorem{theorem}{Theorem} [section]
\newtheorem{lemma}[theorem]{Lemma}
\newtheorem{proposition}[theorem]{Proposition}
\newtheorem{remark}[theorem]{Remark}
\newtheorem{example}{Example}
\newtheorem{question}[theorem]{Question}
\newtheorem{oldtheorem}{Theorem}
\newcommand{\noi}{\noindent}
\newcommand{\T}{\mathbb{T}}
\let\P= \undefined
\newcommand{\P}{\mathbf{P}}
\newcommand{\dl}{\delta}
\newcommand{\eps}{\varepsilon}
\newcommand{\ft}{\widehat}
\newcommand{\wt}{\widetilde}
\newcommand{\cj}{\overline}
\newcommand{\dt}{\partial_t}
\renewcommand{\l}{\ell}
\newcommand{\les}{\lesssim}
\newcommand{\jb}[1]
{\langle #1 \rangle}
\renewcommand{\S}{\mathcal{S}}
\newtheorem*{ackno}{Acknowledgements}
\numberwithin{equation}{section}
\numberwithin{theorem}{section}
\newcommand{\too}{\longrightarrow}
\newcommand{\BMO}{\textit{BMO} }
\newcommand{\CMO}{\textit{CMO} }
\newcommand{\CZ}{Calder\'on-Zygmund }
\DeclareRobustCommand\widecheck[1]{{\mathpalette\@widecheck{#1}}}
\def\@widecheck#1#2{%
   \setbox\z@\hbox{\m@th$#1#2$}%
   \setbox\tw@\hbox{\m@th$#1%
      \widehat{%
         \vrule\@width\z@\@height\ht\z@
         \vrule\@height\z@\@width\wd\z@}$}%
   \dp\tw@-\ht\z@
   \@tempdima\ht\z@ \advance\@tempdima2\ht\tw@ \divide\@tempdima\thr@@
   \setbox\tw@\hbox{%
      \raise\@tempdima\hbox{\scalebox{1}[-1]{\lower\@tempdima\box\tw@}}}%
   {\ooalign{\box\tw@ \cr \box\z@}}}
\begin{document}
\baselineskip = 14pt

\title[Compact bilinear operators]
{Compact bilinear operators and paraproducts revisited}

\author[\'A. B\'enyi, G. Li, T. Oh, and R. H. Torres]
{\'Arp\'ad B\'enyi, Guopeng Li, Tadahiro Oh, and Rodolfo H. Torres}

\address{\'Arp\'ad B\'enyi, Department of Mathematics\\
516 High St, Western Washington University\\ Bellingham, WA 98225,
USA.}

\email{benyia@wwu.edu}

\address{
Guopeng Li, School of Mathematics\\
The University of Edinburgh\\
and The Maxwell Institute for the Mathematical Sciences\\
James Clerk Maxwell Building\\
The King's Buildings\\
Peter Guthrie Tait Road\\
Edinburgh\\
EH9 3FD\\
United Kingdom, 
and 
Department of Mathematics and Statistics\\
Beijing Institute of Technology\\
Beijing\\ China}

\email{guopeng.li@ed.ac.uk}

\address{
Tadahiro Oh, School of Mathematics\\
The University of Edinburgh\\
and The Maxwell Institute for the Mathematical Sciences\\
James Clerk Maxwell Building\\
The King's Buildings\\
Peter Guthrie Tait Road\\
Edinburgh\\
EH9 3FD\\
 United Kingdom}

\email{hiro.oh@ed.ac.uk}

\address{Rodolfo H. Torres,
Department of Mathematics\\
University of California, Riverside\\
200 University Office Building\\
Riverside, CA 92521, USA}

\email{rodolfo.h.torres@ucr.edu}

\subjclass[2020]{42B20, 47B07, 42B25}

\keywords{bilinear Calder\'on-Zygmund operator;
$T(1)$ theorem;
compactness; Carleson measure; interpolation; paraproduct}

\begin{abstract}
We present a new proof of the compactness of bilinear paraproducts with \CMO symbols.
By drawing an analogy to compact linear operators,
we  first explore further properties
of compact bilinear operators  on Banach spaces
and present examples.
We then prove
compactness of bilinear paraproducts with \CMO symbols
by combining one of the properties
of compact bilinear operators thus obtained
 with vanishing Carleson measure estimates and interpolation of bilinear compactness.

\end{abstract}

%
\maketitle

\tableofcontents

\section{Introduction}

The concept of compactness in the context of general multilinear operators was defined in Calder\'on's seminal work on interpolation \cite{Cal}. However, outside interpolation theory, the first manifestation of this concept in harmonic analysis appeared much later in the work~\cite{BenTor1}
by  the first and fourth authors
 who proved that commutators of bilinear Calder\'on-Zygmund operators with \CMO functions are compact from $L^p(\R^d) \times L^q(\R^d) $ into $L^r(\R^d)$ for appropriate exponents $p, q, r$, thus extending the classical result of Uchiyama \cite{Uch} to the bilinear
setting;  see also \cite{BO}
in the context of
bilinear pseudodifferential operators.
 Various generalizations and variations have followed, and the concept of bilinear compactness has taken on a life of its own within this area of research. For an overview (certainly not exhaustive) of recent results on commutators of several classes of bilinear operators in harmonic analysis, see
 the survey paper \cite{BenTor2}.

A fundamental result in the theory of linear Calder\'on-Zygmund operators is
the celebrated  $T(1)$ theorem due to David and Journ\'e \cite{DavJou}, which 
 states that a singular integral operator~$T$ with a  Calder\'on-Zygmund  kernel is bounded if and only if 
it satisfies a certain weak boundedness property (WBP) and $T(1)$ and $T^*(1)$  are functions in 
\BMO (when properly defined).
Here,  $T^*$ denotes the formal transpose of $T$.
In the same paper (see~\cite[p.\,380]{DavJou}),
David and Journ\'e  presented 
another equivalent and extremely elegant statement
that avoids mentioning the WBP, 
 based on controlling the action of $T$  on the omnipresent character functions in harmonic analysis,  $x \mapsto e^{ix\cdot \xi}$ for all $\xi \in \R^d$.
A simplified  proof was then presented by Coifman and Meyer \cite{CM}, which was followed by several 
wavelet-based proofs. 
Finally, Stein~\cite{Stein} provided a 
quantitative statement of the $T(1)$ theorem only in terms of appropriate $L^2$-estimates, which completely avoids the mentioning of the WBP and $\BMO$\,.
Nonetheless,  in his proof, 
 both the WBP and \BMO conditions are still used in some form. 
This  version of a $T(1)$ theorem by  Stein is 
based on controlling 
 the action of $T$ and $T^*$
on normalized bump functions,
 which can be more directly verified in some applications.  
It is important to mention that all these different arguments employ in one way or another the construction of paraproduct operators 
which reduce the matter
to the  particular case of a simpler operator $T$ satisfying $T(1)=T^*(1)=0$. 
The proof  of  boundedness of paraproduct operators by a direct method without using the $T(1)$ theorem is then key.

In the multilinear setting, the first partial version of the $T(1)$ theorem was obtained by Christ and Journ\'e \cite{CJ}, while the full result 
 \cite{GraTor}
is due to Grafakos and the last named author of this article.  
In \cite{GraTor},  
the result was proved using the multilinear version of the control on exponentials and through an iterative process, relying on Stein's $T(1)$ theorem in the linear setting.
In particular, 
the formulation in \cite{GraTor}
was not
in a truly multilinear analogue of the original formulation in ~\cite{DavJou}. 
A version of the bilinear $T(1)$ theorem 
closest  
to that 
in \cite{DavJou} is due to Hart \cite{Har}.

Interestingly, the study of compactness of commutators in the multilinear setting brought back a lot of 
attention to  results involving the notion of compactness even in the linear setting. The literature nowadays has an abundance of harmonic analysis  results related to compactness of commutators in a plethora of different settings
such as  compact weighted estimates,  compact extrapolation, and compact wavelet representations, 
 in both the linear and multilinear cases, and also numerous extensions of the classical Kolmogorov-Riesz
 compactness theorem (a main tool for proving compactness; 
 see, for example, \cite{HHM}). See again \cite{BenTor2} for a survey on these  extensions.
In Section \ref{SEC:2},  we look at some further properties of bilinear compact operators which are perhaps surprising when compared to the linear situation. For example,
 we show that for a bilinear compact operator on general Banach spaces,
  it is not necessarily the case that its transposes are also compact;
  see Proposition \ref{PROP:1}.

Compact Calder\'on-Zygmund operators exist, but most examples are provided by operators arising in the context of layer potential  techniques on smooth bounded domains 
and by those artificially constructed, 
and their compactness can be easily established directly. 
Perhaps,  one notable exception is the class of pseudodifferential operators introduced by Cordes \cite{Cordes} and revisited recently in the weighted setting in \cite{CarSorTor}.  The other important exception is provided by paraproduct operators with appropriate symbols, which we will revisit here in the bilinear setting.

It is natural to expect that  compactness of paraproduct operators would play a crucial role in the proof of a $T(1)$
compactness theorem. This is in fact the case, as it was established in the first  version of such a theorem by Villarroya \cite{V}, which makes some  additional assumptions  on the kernel of a
\CZ  operator. The recent works by Mitkovski and Stockdale~\cite{MitSto} in the linear case (see also Remark \ref{REM:lin} below)
and by Fragkos, Green, and Wick \cite[Theorems~1 and~2]{FraGreWic} in the multilinear case present $T(1)$ compactness results for Calder\'on-Zygmund operators that have a similar flavor to the original $T(1)$ theorem;
see also recent results \cite{BLOT2, CLSY}.
%
Restricting ourselves to the bilinear case, the aforementioned result from~\cite{FraGreWic} is as follows.

\begin{oldtheorem}\label{THM:A}

Let $T: \S(\R^d) \times \S(\R^d) \to \S'(\R^d)$ be a bilinear singular integral operator with a standard Calder\'on-Zygmund kernel, and $1 < p, q \le \infty$ and $\frac 12 < r < \infty$ such that $\frac{1}{p}+\frac{1}{q}=\frac{1}{r}$. Then, $T$ is a compact operator from $L^p(\R^d) \times L^q(\R^d)$ to $L^r(\R^d)$ if and only if
\begin{itemize}
\item[\textup{(i)}]
$ T $ satisfies the  weak compactness property, 
and

\smallskip

\item[\textup{(ii)}]
$ T(1, 1), \,
T^{*1}(1, 1), $
and $ T^{*2}(1, 1)$
are in
$\CMO$.
\end{itemize}

\end{oldtheorem}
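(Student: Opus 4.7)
The plan is to follow the bilinear $T(1)$ framework of Grafakos--Torres and Hart, but upgrading ``boundedness / BMO'' to ``compactness / CMO'' throughout. The central new ingredient, which is the main contribution of the present paper, is the compactness of bilinear paraproducts with CMO symbols. Once this is available, the $T(1)$ compactness theorem reduces to the paraproduct decomposition strategy of David--Journ\'e and Coifman--Meyer, adapted to the bilinear setting.

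For the sufficiency direction, one constructs three bilinear paraproduct operators $\Pi^{(0)}_b, \Pi^{(1)}_b, \Pi^{(2)}_b$ depending on a symbol $b$, arranged so that $\Pi^{(0)}_b(1,1) = b$ (modulo constants) while its two bilinear transposes annihilate $(1,1)$, and analogously with indices cyclically permuted. Then one decomposes
\[
T \;=\; \Pi^{(0)}_{T(1,1)} + \Pi^{(1)}_{T^{*1}(1,1)} + \Pi^{(2)}_{T^{*2}(1,1)} + R,
\]
where the remainder $R$ is a bilinear \CZ operator satisfying $R(1,1) = R^{*1}(1,1) = R^{*2}(1,1) = 0$ and inheriting the weak compactness property from $T$ (the paraproducts themselves are easily checked to satisfy it). Each paraproduct is compact from $L^p(\R^d) \times L^q(\R^d)$ to $L^r(\R^d)$ by the main paraproduct compactness theorem of the present paper. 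For $R$, one runs a quantitative bilinear $T(1)$-type argument in the spirit of Stein: the vanishing of $R(1,1)$ and the transposes together with the weak compactness property provide a ``good'' expansion of $R$ (for instance, a Haar or smooth wavelet expansion) whose high- and low-frequency tails can be truncated to produce finite-rank approximants, and combining these truncations with the Kolmogorov--Riesz compactness criterion yields compactness of $R$.

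For the necessity direction, assume $T$ is compact. Testing $T$ against appropriate approximations $f_n, g_n \to 1$ (for example, smooth truncations $\phi(\,\cdot\,/n)$) and using compactness of $T$ to extract strong $L^r$-convergence of $T(f_n, g_n)$, one then pairs with $L^{r'}$-atoms attached to cubes to recover the three vanishing mean oscillation properties defining $\CMO$ for $T(1,1)$. Some care is required for the transposes, since, as observed in Section~\ref{SEC:2} and in Proposition~\ref{PROP:1}, compact bilinear operators need not have compact transposes in general; however, for a bilinear \CZ operator the two transposes are themselves bilinear \CZ operators, and one may dualize the preceding argument using the CZ kernel representation to deduce $T^{*1}(1,1), T^{*2}(1,1) \in \CMO$. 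The weak compactness property follows by testing $T$ on normalized bumps supported in a common cube and invoking compactness.

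The hardest step is the compactness of bilinear paraproducts with CMO symbols. Unlike in the boundedness theory, where BMO enters through Carleson measure estimates of the symbol, compactness requires a \emph{vanishing} Carleson measure estimate in all three scale regimes, together with a careful approximation argument: one first verifies compactness for symbols in a dense subclass of $\CMO$ (such as smooth, compactly supported functions), where direct estimates apply, and then propagates compactness to general CMO symbols, extending across the full range of exponents $(p,q,r)$ via an interpolation theorem for bilinear compact operators. These two ingredients---vanishing Carleson and bilinear compact interpolation---are precisely the tools that will be developed in the preceding sections of the paper.
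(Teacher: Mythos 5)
The first thing to note is that the paper does not prove Theorem \ref{THM:A} at all: it is quoted from Fragkos--Green--Wick \cite{FraGreWic} and used as a black box (for instance in the proof of Proposition \ref{PROP:4}\,(i)). So there is no internal proof to measure your argument against; what the paper actually proves is only one ingredient of the program you describe, namely the compactness of the bilinear paraproducts $\Pi_b$ with $b\in\CMO$ (Proposition \ref{PROP:para}). Your overall outline --- paraproduct subtraction, a compact wavelet representation of the reduced operator $R$, and a testing argument for necessity --- is a fair description of the strategy carried out in \cite{FraGreWic}, but as written it is a program rather than a proof: the compactness of $R$ from the weak compactness property together with $R(1,1)=R^{*1}(1,1)=R^{*2}(1,1)=0$ is the technical heart of \cite{FraGreWic} and cannot be dispatched by ``truncating tails and invoking Kolmogorov--Riesz'' without substantial work, and the necessity of the \CMO conditions likewise requires a careful choice of test functions rather than a one-line dualization.

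Two concrete points. First, on the one step this paper does prove: your proposed route for the paraproduct (verify compactness on a dense subclass of smooth compactly supported symbols, then pass to general $b\in\CMO$ by approximation) is not what the paper does, and to close it you would need operator-norm convergence $\|\Pi_{b_k}-\Pi_b\|\to 0$ as $b_k\to b$ in \BMO, i.e.\ the quantitative boundedness estimate of \cite[Lemma 5.1]{Har} applied to $\Pi_{b_k-b}$. The paper instead treats a general $b\in\CMO$ directly: it uses that $d\mu=|Q_t b|^2\,dx\,\frac{dt}{t}$ is a \emph{vanishing} Carleson measure, the resulting compactness of $P_t:L^p(\R^d)\to L^p(\R^{d+1}_+,d\mu)$ from \cite[Theorem 2.1]{DinMei}, the sequential weak-to-strong criterion of Proposition \ref{PROP:2}\,(i) to obtain compactness into $L^2$, and only then interpolation of bilinear compactness \cite{CobFerMar} to reach the full range of exponents. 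Second, your sufficiency argument invokes ``the main paraproduct compactness theorem of the present paper,'' but Proposition \ref{PROP:para} is restricted to $1<p,q<\infty$, whereas Theorem \ref{THM:A} allows $p=\infty$ or $q=\infty$; the endpoint case requires a separate argument (see \cite[Remark 3.5]{FraGreWic}), so as stated your reduction does not cover the full claimed range.
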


In other words, as compared to the bilinear $T(1)$ theorem from \cite{Har}, the weak boundedness property is replaced by an appropriate weak compactness property, while the requirement of~$T$ and its transposes acting on the constant function 1 to belong to \BMO is now replaced by the stronger assumption of belonging to
$\CMO$.  The appearance of $\CMO$ (see Section~\ref{SEC:3} for its definition) is very natural as this space appears in other results related to compactness, starting from the result in \cite{Uch}.

As in the case of the $T(1)$ theorem for boundedness, a main ingredient in  the proof of Theorem A  (and similarly in its linear versions) is to reduce the study of the operator $T$ to that of
$\wt T$ given by
\[
\wt T=T-\Pi_{T(1, 1)}-\Pi_{T^{*1}(1, 1)}^{*1}-\Pi_{T^{*2}(1, 1)}^{*2},
\]

\noi
where $\Pi_b$ denotes an appropriately defined bilinear paraproduct
(satisfying \eqref{para1}), and then realize the operator $\wt T$ as a sum of compact wavelet ones. The reduction from $T$ to $\wt T$ via paraproducts is employed in \cite{Har} as well, the difference being that for the boundedness of~$\wt T$ one can appeal to bilinear square function estimates.
Thus, as already alluded to, the understanding of boundedness or compactness of  bilinear paraproducts is of paramount importance in both the classical multilinear $T(1)$ theorem and its compact $T(1)$ counterpart;
see \cite[Section 5]{FraGreWic}.
See also
\cite[Section 6]{V}
and
\cite[Section 4]{MitSto} in the linear case.

The original goals of this work were more ambitious than what we present here. However, while working on this article, we became aware of the results in \cite{FraGreWic}, which address some of our initial questions about  bilinear compact  $T(1)$ theorems. 
Hence, our modest goal of this short note is to revisit only the compactness of multilinear paraproducts 
with \CMO symbols through 
a different lens than the one in \cite[Section 5]{FraGreWic}, 
namely, by exploring and using  more delicate properties of compact bilinear operators on Banach spaces
which are of interest on their own;
see  Section~\ref{SEC:2}.
Our result  (Proposition \ref{PROP:para}) and its proof in Section~\ref{SEC:3} should be construed
as a compact counterpart
of  \cite[Lemma 5.1]{Har}
on the  boundedness of paraproducts;
the additional ingredients in our argument will be the vanishing of the appropriate Carleson measure as well as the use of interpolation\footnote{While it may be more appropriate
to use the term ``extrapolation'' as in
\cite{COY, HL2, HL1},
we follow \cite{CobFer, CobFerMar}
and use the term ``interpolation''.}
for compact bilinear operators from the work of Cobos, Fern\'andez-Cabrera, and Mart\'inez~\cite{CobFerMar}. For the ease of notation, we will consider only the bilinear case
but interested readers may extend 
the results to a more general multilinear setting.

\section{Some subtle properties of compact bilinear operators}
\label{SEC:2}

Given a metric space $M$,
we use $B_r^M(x)$ to denote the closed ball (in $M$) of radius $r > 0$
centered at $x \in M$.
When it is centered at the origin $x = 0$,
we simply write $B_r^M$ for $ B_r^M(0)$.
When there is no confusion, we drop the superscript $M$
and simply write $B_r(x)$ and $B_r$.

Let $X$, $Y$, and $Z$ be normed vector spaces.
Recall from  \cite{Cal, BenTor1}
that  we say that a bilinear operator $T: X\times Y\to Z$ is a compact bilinear operator
if the image $T(B_1^X\times B_1^Y)$ is precompact in $Z$.
Several equivalent characterizations
of compactness for a bilinear operator $T: X\times Y\to Z$
 are stated in \cite[Proposition 1]{BenTor1}.
 In this section, we
 explore further properties
 of compact bilinear operators
 by comparing them with the corresponding properties of  compact linear operators.
Before proceeding further, let us set some notations.
 We use $\jb{\cdot, \cdot}$ to denote the usual dual pairing; the spaces
  to which the duality pairing applies will be clear from the context. We define the two transposes of $T$ as $T^{*1}: Z^*\times Y\to X^*$ and $T^{*2}: X\times Z^*\to Y^*$ via
\begin{align}
\jb{T(x, y), z^*}=\jb{T^{*1}(z^*, y), x}=\jb{T^{*2}(x, z^*), y}
\label{trans1}
\end{align}

\noi
for all $x\in X$, $y\in Y$ and $z^*\in Z^*$.
Given a  bilinear operator
$T: X \times Y \to Z$,
we define its section operators
 $T_x:Y\to Z$ for fixed $x \in X$
 and
 $T_y:X\to Z$ for fixed $y \in Y$
by setting
\begin{align}
T_x(y) = T(x, y), \ \ y \in Y
\qquad \text{and}\qquad
T_y(x) = T(x, y), \ \ x \in X.
\label{trans2}
\end{align}

\noi
Note that bilinearity of $T$ is equivalent to
linearity of both $T_x$ and $T_y$ for any $x \in X$
and $y \in Y$.
We say that a  bilinear operator
$T: X \times Y \to Z$ is

\smallskip
\begin{itemize}

\item
 separately continuous
if $T_x$ and $T_y$
are continuous linear operators
for any $x \in X$
and $y \in Y$,

\smallskip
\item
 separately compact
if $T_x$ and $T_y$
are compact linear operators
for any $x \in X$
and $y \in Y$.

\end{itemize}

\smallskip

\noi
If $X$ or $Y$ is Banach,
then joint continuity of $T$
is equivalent to separate continuity of $T$;
\cite[Theorem~2.17]{Rudin}.
 The completeness of one of the spaces in the domain of $T$ is crucial for this equivalence.
However, the notion of separate compactness
is strictly weaker than the notion of (joint) compactness
and
 it turns out that  the assumption of completeness of the spaces $X$ and $Y$ is of no importance.
In  \cite[Example 4]{BenTor1}, 
an example of a separately compact bilinear
operator which is not even continuous
(and hence not compact) is provided, where  the spaces are not complete in the relevant topologies.
In Example \ref{EX:3} below, we present
a separately compact bilinear operator which is  continuous but not
 compact,  
 where all the spaces involved are Banach.

\medskip

We first recall the following characterizations
for compact linear operators.

\begin{lemma}\label{LEM:com}
Let $X$ and $Y$ be Banach spaces
and $T:X\to Y$ be a continuous linear operator.

\smallskip

\begin{itemize}
\item[\textup{(i)}]
If $T$ is compact,
then $T$ maps  weakly convergent sequences
to strongly convergent sequences.
Moreover, by assuming in addition that $X$ is reflexive,
if $T$ maps  weakly convergent sequences
to strongly convergent sequences,
then $T$ is compact.

\smallskip

\item[\textup{(ii)}]
The operator
$T$ is compact if
and only if its transpose $T^*$ is compact.

\end{itemize}

\end{lemma}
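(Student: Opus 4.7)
For part (i), my plan is the standard subsequence-of-subsequence argument. I would first invoke the uniform boundedness principle to conclude that any weakly convergent sequence $x_n \rightharpoonup x$ is norm-bounded, so $\{Tx_n\}$ lies inside the relatively compact set $T(B_R^X)$ for some $R>0$. Because $T$ is continuous (every compact operator is), $T$ is weak-to-weak continuous: for each $y^* \in Y^*$, $\langle Tx_n, y^*\rangle = \langle x_n, T^* y^*\rangle \to \langle x, T^* y^*\rangle$, so $Tx_n \rightharpoonup Tx$ in $Y$. Now pick any subsequence of $\{Tx_n\}$. By compactness it admits a norm-convergent sub-subsequence, and by the uniqueness of weak limits its norm limit is necessarily $Tx$. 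Since every subsequence has a sub-subsequence converging in norm to $Tx$, the full sequence converges to $Tx$ in norm.

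For the converse half of (i), assuming $X$ is reflexive, I would take an arbitrary bounded sequence $\{x_n\}\subset X$; reflexivity (via the Eberlein–\v{S}mulian theorem) produces a weakly convergent subsequence $x_{n_k} \rightharpoonup x$, and the standing hypothesis then gives $T x_{n_k} \to Tx$ in norm. This shows that $T(B_1^X)$ is sequentially precompact, hence precompact, so $T$ is compact. It is worth flagging that this direction genuinely requires reflexivity: without it, bounded sequences need not possess weakly convergent subsequences and the argument collapses.

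For part (ii), I would prove Schauder's theorem by the classical Arzel\`a--Ascoli route. Given a bounded sequence $\{y_n^*\}\subset Y^*$ with $\|y_n^*\|\le M$, I would restrict the functionals to the compact set $K = \overline{T(B_1^X)} \subset Y$, producing functions $f_n(y) = \langle y, y_n^*\rangle$ on $K$. These are uniformly bounded by $M\sup_{y\in K}\|y\|$ and uniformly $M$-Lipschitz, hence equicontinuous. Arzel\`a--Ascoli then furnishes a uniformly Cauchy subsequence $\{f_{n_k}\}$ on $K$. The identity
\begin{align*}
\|T^* y_{n_k}^* - T^* y_{n_j}^*\|_{X^*}
= \sup_{\|x\|\le 1}\bigl|\langle Tx, y_{n_k}^* - y_{n_j}^*\rangle\bigr|
= \sup_{y \in T(B_1^X)}|f_{n_k}(y) - f_{n_j}(y)|
\end{align*}
transfers the uniform Cauchy property to $\{T^* y_{n_k}^*\}$ in $X^*$, and completeness of $X^*$ yields a limit, proving $T^*$ compact. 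For the reverse implication, I would apply the forward direction to $T^*$ to deduce compactness of $T^{**}:X^{**}\to Y^{**}$, and then note that under the canonical isometric embeddings $J_X:X\hookrightarrow X^{**}$ and $J_Y:Y\hookrightarrow Y^{**}$ one has $T^{**}\circ J_X = J_Y\circ T$; compactness of $T$ follows since $J_Y$ is an isometry onto its range. There is no real obstacle in either part; all the work is bookkeeping around the uniform boundedness principle, Arzel\`a--Ascoli, and the compatibility of canonical embeddings with transposes, with the only conceptual subtlety being the role of reflexivity in the converse of (i).
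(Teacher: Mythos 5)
Your proof is correct, and it coincides with the standard arguments the paper itself relies on: the paper does not write out a proof of this lemma but instead cites Reed--Simon (Theorem VI.11 for the weak-to-strong property, the sequential characterization of compactness plus Eberlein--\v{S}mulian for the reflexive converse, and Theorem VI.12(c) for Schauder's theorem), and your subsequence-of-subsequence argument and Arzel\`a--Ascoli proof are exactly those classical proofs. As a side remark, your Arzel\`a--Ascoli argument for part (ii) is the same device the authors later adapt in the proof of Proposition \ref{PROP:4}\,(ii) in the bilinear setting.
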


As for the first claim in Part (i), see
\cite[Theorem VI.11]{RS}.\footnote{For this part,
we do not need to assume that $X$ and $Y$ are Banach spaces.
The result holds for normed vector spaces $X$ and $Y$.}
The second claim in Part~(i) follows from \cite[Definition on p.\,199]{RS},
saying that $T$ is compact if and only if
for any bounded sequence $\{x_n\}_{n \in \N} \subset X$,
the sequence $\{T(x_n)\}_{n \in \N}$ has a convergent subsequence in $Y$,
and that a bounded sequence
$\{x_n\}_{n \in \N} \subset X$ has a weakly convergent subsequence
under the extra assumption that $X$ is reflexive.
As for Part (ii),
see
\cite[Theorem VI.12 (c)]{RS}.

By drawing an analogy to the linear case above,
we investigate the following questions.

\begin{question}\label{Q:1}
Let $X$, $Y$, and $Z$ be Banach spaces
and $T:X\times Y \to Z$ be a continuous bilinear operator.
Do any of the following statements hold true in the bilinear setting?

\begin{itemize}
\item[\textup{(i)}]
If $T$ is compact,
then
for every sequence $\{(x_n, y_n)\}_{n\in \N}\subset X\times Y$
with $\{x_n\}_{n\in \N}$ weakly convergent in $X$ and $\{y_n\}_{n\in \N}$ weakly convergent in $Y$, the sequence $\{T(x_n, y_n)\}_{n\in \N}$ is strongly convergent in $Z$.
By assuming in addition
that $X$ and $Y$ are reflexive,
if for every sequence $\{(x_n, y_n)\}_{n\in \N}\subset X\times Y$
with $\{x_n\}_{n\in \N}$ weakly convergent in $X$ and $\{y_n\}_{n\in \N}$ weakly convergent in $Y$, the sequence $\{T(x_n, y_n)\}_{n\in \N}$ is strongly convergent in $Z$,
then $T$ is compact.

\smallskip

\item[\textup{(ii)}]
The operator  $T$ is compact if and only if $T^{*1}$ is compact if and only if $T^{*2}$ is compact.

\end{itemize}

\end{question}

As we see below,
except for the second statement in Part (i),
the answer is negative in general,
exhibiting a sharp contrast to the linear case
(Lemma \ref{LEM:com}).
In the context of
bilinear Calder\'on-Zygmund operators
from
$L^p(\R^d) \times L^q(\R^d)$ into $L^r(\R^d)$ with $\frac{1}{p}+\frac{1}{q}=\frac{1}{r}$,
however,
the answers to Question \ref{Q:1}\,(i) and (ii)
turn out to be positive
(at least in the reflexive case $1 < p, q, r  < \infty$);
see Proposition  \ref{PROP:4}.
While we restrict our attention only to the bilinear case
in the following,
the discussion (in particular, Propositions \ref{PROP:2}, \ref{PROP:1},
and \ref{PROP:4})
easily extends to the general $m$-linear case.

The next proposition provides
an answer to Question \ref{Q:1}\,(i).

\begin{proposition}
\label{PROP:2}
Let $X$ and $Y$ be  Banach spaces,
 $Z$  be a normed vector space,
 and
$T: X\times Y\to Z$ be a continuous bilinear operator.

\smallskip

\begin{itemize}
\item[\textup{(i)}]
In addition, assume
that $X$ and $Y$ are reflexive.
If for every sequence $\{(x_n, y_n)\}_{n\in \N}\subset X\times Y$
with $\{x_n\}_{n\in \N}$ weakly convergent in $X$ and $\{y_n\}_{n\in \N}$ weakly convergent in $Y$, the sequence $\{T(x_n, y_n)\}_{n\in \N}$ is strongly convergent in $Z$,
then $T$ is compact.

\smallskip

\item[\textup{(ii)}]
The converse of Part (i) is false.

\smallskip

\item[\textup{(iii)}]
If $T$ is compact,
then
 for every sequence $\{(x_n, y_n)\}_{n\in \N}\subset X\times Y$
with $\{x_n\}_{n\in \N}$ weakly convergent in $X$ and $\{y_n\}_{n\in \N}$ weakly convergent in $Y$, the sequence $\{T(x_n, y_n)\}_{n\in \N}$
has a strongly convergent subsequence  in $Z$.

\end{itemize}

\end{proposition}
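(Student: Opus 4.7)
The three parts are handled in turn.

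\medskip

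\emph{Part (i).}  The plan is to take an arbitrary sequence
$\{(x_n, y_n)\}_{n\in\N}\subset B_1^X \times B_1^Y$ and exhibit a strongly
convergent subsequence of $\{T(x_n, y_n)\}_{n\in\N}$; this characterizes
precompactness of $T(B_1^X\times B_1^Y)$ and hence compactness of $T$.
Since $X$ and $Y$ are reflexive, their unit balls are sequentially weakly
compact, so along a suitable subsequence $x_{n_k}\wto x$ in $X$ and
$y_{n_k}\wto y$ in $Y$.  The hypothesis then directly yields the strong
convergence of $\{T(x_{n_k}, y_{n_k})\}_{k\in\N}$, finishing the argument.

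\medskip

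\emph{Part (ii).}  I would provide a simple counterexample.  Take
$X=Y=\ell^2(\N)$ (reflexive), $Z=\R$, and let $T(x,y)=\jb{x,y}_{\ell^2}$ be
the usual inner product.  By the Cauchy-Schwarz inequality,
$T(B_1^X\times B_1^Y)\subset [-1,1]$ is precompact, so $T$ is a continuous
compact bilinear operator in the sense recalled from \cite{Cal, BenTor1}.
Denoting by $\{e_n\}_{n\in\N}$ the standard basis of $\ell^2$, both
$x_n=e_n$ and $y_n=(-1)^n e_n$ converge weakly to $0$, yet
$T(x_n,y_n)=(-1)^n$ does not converge in $\R$.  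This rules out the converse
of Part (i).

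\medskip

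\emph{Part (iii).}  Weakly convergent sequences in a Banach space are
norm-bounded by the uniform boundedness principle, so there exists $M>0$
with $\|x_n\|_X, \|y_n\|_Y \le M$ for all $n$.  By bilinearity,
$T(x_n,y_n)=M^2\, T(x_n/M,\, y_n/M)\in M^2\cdot T(B_1^X\times B_1^Y)$, and
the right-hand set is precompact by compactness of $T$; hence
$\{T(x_n,y_n)\}_{n\in\N}$ admits a strongly convergent subsequence in $Z$.

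\medskip

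No step is particularly delicate:  (i) and (iii) are short consequences of
reflexivity and the uniform boundedness principle combined with the very
definition of bilinear compactness.  The only place where one has to
exercise genuine care is (ii), where one must recognize that, in the
bilinear setting, the image of the unit ball can easily be precompact while
joint weak-to-strong sequential continuity fails, and then locate a
concrete example witnessing this asymmetry with the linear situation
recorded in Lemma \ref{LEM:com}\,(i);  the inner product on $\ell^2$ paired
with an alternating-sign test sequence is the simplest such example.
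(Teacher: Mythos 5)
Your proof is correct and follows essentially the same route as the paper: parts (i) and (iii) are the identical reflexivity (Eberlein--\v{S}muljan) and boundedness arguments combined with the sequential characterization of bilinear compactness, and your counterexample for (ii) --- the $\ell^2(\N)$ inner product tested on $(e_n,(-1)^n e_n)$ --- is structurally the paper's Example \ref{EX:1} (the pairing $T(x,y)=\int_{\T}xy$ on $L^2(\T)\times L^2(\T)$ evaluated on characters with a parity shift) transported to sequence space. No gaps.
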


In Section \ref{SEC:3},
we will use
Proposition \ref{PROP:2}\,(i) in proving
compactness of a bilinear paraproduct;
see Proposition \ref{PROP:para}.

\begin{proof}
(i) Let $\{(x_n, y_n)\}_{n\in \N}$ be a bounded sequence in $X\times Y$.
Our goal is  to construct a subsequence  whose image under $T$ is convergent in $Z$.
Since $X$ is a reflexive Banach space and $\{x_n\}_{n\in \N}$ is bounded in $X$,
it follows from  the Banach-Alaoglu and Eberlein-\v{S}muljan theorems
that there exists a subsequence $\{x_{n_j}\}_{j\in \N}$ that is weakly convergent in $X$.
By the reflexivity of $Y$ and the boundedness of
$\{y_{n_j}\}_{j\in \N}$,
we can  extract a further subsequence $\{y_{{n_{j_k}}}\}_{k\in \N}$ that is weakly convergent in $Y$.
Then, by the hypothesis, the sequence $\{T(x_{{n_{j_k}}}, y_{{n_{j_k}}})\}_{k\in \N}$
is strongly convergent in $Z$.
Hence, from  \cite[Proposition 1 (c7)]{BenTor1},
we conclude that  $T$ is compact.

\medskip

\noi
(ii) See Examples  \ref{EX:1} and \ref{EX:2} below.

\medskip

\noi
(iii)
Fix a  sequence $\{(x_n, y_n)\}_{n\in \N}\subset X\times Y$
such that  $\{x_n\}_{n\in \N}$ is weakly convergent in $X$ and $\{y_n\}_{n\in \N}$
is weakly convergent in $Y$.
Then, $\{(x_n, y_n)\}_{n \in \N}$ is bounded in $X \times Y$.
Hence, it follows from the compactness of $T$ and  \cite[Proposition 1 (c7)]{BenTor1}
that there exists a subsequence $\{T(x_{n_j}, y_{n_j})\}_{j \in \N}$
converging strongly in $Z$.
\end{proof}

\begin{remark}\label{REM:x}\rm
In view of  the bilinearity of $T$,
in Proposition \ref{PROP:2}\,(i),
 it is enough to verify that for all sequences for which at least one of
$\{x_n\}_{n\in \N}$ or $\{y_n\}_{n\in \N}$
converges weakly to $0$,
$\{T(x_n, y_n)\}_{n\in \N}$  converges strongly to $0$ in $Z$,
 to imply that $T$ is compact.
Note that it is {\it not} sufficient
to assume that {\it both}
$\{x_n\}_{n\in \N}$ and $\{y_n\}_{n\in \N}$
converge weakly to $0$
(and showing
that $\{T(x_n, y_n)\}_{n\in \N}$  converges strongly to $0$ in $Z$).

\end{remark}

Part (i) of the next proposition provides a negative answer to Question \ref{Q:1}\,(ii),
showing that, regarding compactness,\footnote{Compare this
with continuity; $T$ is continuous
if and only if $T^{*1}$ is continuous
if and only if $T^{*2}$ is continuous.}
  the bilinear case is quite different from the linear case
(Lemma~\ref{LEM:com}\,(ii)).

\begin{proposition}\label{PROP:1}
\textup{(i)}
There exist Banach spaces $X$, $Y$, and $Z$
and a compact  bilinear operator
 $T:X\times Y \to Z$
such that neither $T^{*1}$ nor $T^{*2}$ is compact.

\smallskip

\noi
\textup{(ii)}
Let $X$, $Y$, and $Z$ be Banach  spaces.
A bilinear operator
 $T:X\times Y \to Z$
 is separately compact
if and only if $(T^{*1})_y$ and $(T^{*2})_x$ are compact for any  $(x, y) \in X \times Y$.
Here, $(T^{*1})_y$ and $(T^{*2})_x$
are the section operators \textup{(}of the transposes\textup{)}
defined in~\eqref{trans2}.

\end{proposition}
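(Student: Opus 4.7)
The plan is to handle the two parts separately: for (i), exhibit an explicit counterexample that exploits a finite-dimensional target, and for (ii), reduce the claim to Schauder's theorem (Lemma \ref{LEM:com}\,(ii)) via a direct identification of the section operators of the transposes with the Banach-space adjoints of the sections of $T$.

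For part (i), the key observation I would use is that any continuous bilinear map into a finite-dimensional target is automatically compact, whereas compactness of its transposes encodes the behavior of associated linear maps into the (possibly infinite-dimensional) duals of the factor spaces. Accordingly, I would take $X = Y = \ell^2$, $Z = \R$, and set
$$
T(x, y) = \sum_{n \in \N} x_n y_n.
$$
By Cauchy--Schwarz, $T(B_1^X \times B_1^Y) \subset [-1, 1]$, so $T$ is bounded and hence compact as a bilinear operator into $\R$. A direct computation from \eqref{trans1}, using the self-duality $(\ell^2)^* \simeq \ell^2$, yields $T^{*1}(t, y) = t\, y$ and $T^{*2}(x, t) = t\, x$. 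Consequently $T^{*1}(B_1^\R \times B_1^Y)$ equals the closed unit ball of $\ell^2$, which is not precompact; the same applies to $T^{*2}$. Hence neither transpose is compact.

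For part (ii), the argument is a definition chase followed by Schauder's theorem. Fix $y \in Y$; from \eqref{trans1} and \eqref{trans2}, for every $x \in X$ and $z^* \in Z^*$,
$$
\langle (T^{*1})_y(z^*), x \rangle = \langle T^{*1}(z^*, y), x \rangle = \langle T(x, y), z^* \rangle = \langle T_y(x), z^* \rangle,
$$
so $(T^{*1})_y = (T_y)^*$ as operators from $Z^*$ to $X^*$. Since $X$ and $Z$ are Banach, Lemma \ref{LEM:com}\,(ii) gives that $T_y: X \to Z$ is compact if and only if $(T_y)^* = (T^{*1})_y$ is compact. An identical calculation produces $(T^{*2})_x = (T_x)^*$, yielding the analogous equivalence for $T_x$. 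Taking the conjunction over all $(x, y) \in X \times Y$ delivers the stated characterization of separate compactness.

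Neither part presents a substantive obstacle: the content is really concentrated in recognizing the right counterexample for (i) (collapsing $Z$ to scalars trivializes compactness of $T$ but leaves the transposes free to inherit the infinite-dimensional nature of the factor spaces), while (ii) is bookkeeping plus the classical Schauder theorem.
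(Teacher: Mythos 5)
Your proposal is correct and takes essentially the same route as the paper: for (i) the paper's Example~\ref{EX:1} uses the same scalar-valued pairing, $T(x,y)=\int_{\T}x(t)y(t)\,dt$ on $L^2(\T)\times L^2(\T)$ with $Z=\C$ (the Fourier-side twin of your $\ell^2$ example), and for (ii) the paper likewise derives $(T^{*1})_y=(T_y)^*$ and $(T^{*2})_x=(T_x)^*$ from \eqref{trans1}--\eqref{trans2} and invokes Lemma~\ref{LEM:com}\,(ii). The only cosmetic difference is in how non-compactness of the transposes is checked: you identify the image of the unit bi-ball under $T^{*1}$ with the full unit ball of $\ell^2$ and cite Riesz's theorem, while the paper exhibits an explicit bounded sequence $(1,e_n)$ whose image is uniformly separated in $X^*$.
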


\begin{proof}
(i)
See Examples \ref{EX:1} and \ref{EX:2} below.

\smallskip

\noi
(ii)
Suppose that
$(T^{*1})_y$ and $(T^{*2})_x$ are compact for any  $(x, y) \in X \times Y$.
From \eqref{trans1} and \eqref{trans2}, we have
\begin{align*}
\jb{(T^{*1})_y(z^*), x}
= \jb{T^{*1}(z^*, y), x}
= \jb{z^*, T(x, y) }
= \jb{ z^*, T_y(x)}
=
\jb{(T_y)^*(z^*), x}
\end{align*}

\noi
for any $x\in X$, $y \in Y$, and $z^* \in Z^*$.
Hence, together with a similar computation for $(T^{*2})_x$, we have
\begin{align}
(T^{*1})_y = (T_y)^*, \ \  y \in Y
\qquad \text{and}\qquad
(T^{*2})_x = (T_x)^*,
\ \ x \in X.
\label{trans3}
\end{align}

\noi
Then, it follows from Lemma \ref{LEM:com}\,(ii)
with the compactness of
$(T^{*1})_y$ and $(T^{*2})_x$
that $T_y$ and $T_x$ are compact
for any  $(x, y) \in X \times Y$,
which implies separate compactness of $T$ by definition.

Conversely, if $T$ is separately compact,
then
 $T_y$ and $T_x$ are compact
for any  $(x, y) \in X \times Y$.
Hence, from
Lemma \ref{LEM:com}\,(ii) with \eqref{trans3},
we conclude that
 $(T^{*1})_y$ and $(T^{*2})_x$ are compact for any  $(x, y) \in X \times Y$.
\end{proof}

We point out that if $X$ is finite-dimensional,
then $T$ being compact implies $T^{*1}$ is compact.
In this case,  $X^*$ is also finite-dimensional and thus is reflexive.
By noting that
given a sequence $\{(z_n^*, y_n)\}_{n\in \N}\subset B_1^{Z^*}\times B_1^Y$,
 $\{T^{*1}(z_n^*, y_n)\}_{n\in \N}$
 is bounded in $X^*$ and hence
we can extract a
 convergent subsequence,
 which implies compactness of $T^{*1}$.
Similarly,
 if $Y$ is finite-dimensional,
then $T$ being compact implies $T^{*2}$ is compact.
As we see in Example \ref{EX:1},
however, finite dimensionality of the target space $Z$
does not yield compactness of $T^{*1}$ or $T^{*2}$.

\medskip

We now present two examples,
providing proofs
of Proposition \ref{PROP:2}\,(ii)
and Proposition~\ref{PROP:1}\,(i).

\begin{example}\label{EX:1} \rm

Let $X = Y = L^2(\T)$ and $Z = \C$.
Define a bilinear operator $T:X\times Y \to Z$
with $\T = \R/\Z$
by setting
\begin{align*}
T(e_n, e_m) 
= \begin{cases}
1, & \text{if } n + m =0,\\
0, & \text{otherwise},
\end{cases}
 \quad n, m \in \Z
\end{align*}

\noi
and extending the definition bilinearly,
where $e_n(t) = e^{2\pi  int}$, $t \in \T$.
Namely, we have
\[ T(x, y) = \int_\T x(t) y(t) dt.\]

\noi
Then, by Cauchy-Schwarz's inequality
and noting $T(e_n, e_{-n}) = 1$, $n \in \Z$,
we have $\|T\| = 1$, namely $T$ is bounded.
Moreover, $T$ is compact
since
$T(B_1^X \times B_1^Y)
= B_1^Z$ is compact in $Z = \C$.

We first present a proof of  Proposition \ref{PROP:2}\,(ii).
Define a sequence
$\{(x_n, y_n)\}_{n\in \N}\subset X\times Y
= L^2(\T) \times L^2(\T)$
by setting
$x_n = e_{n}$
and $y_n = e_{- n + p(n)}$,
where $p(n)$
denotes the ``parity'' of $n$ given by
\begin{align}
p(n) =
\begin{cases}
1,  & \text{if $n$ is odd},\\
0,  & \text{if $n$ is even}.
\end{cases}
\label{T3}
\end{align}

\noi
By the Riemann-Lebesgue lemma,
we see that both $\{x_n\}_{n \in \N}$
and $\{y_n\}_{n \in \N}$
converge weakly to 0 as $n \to \infty$.
On the other hand, we have
\begin{align*}
T(x_n, y_n) =
\begin{cases}
0, & \text{if $n$ is odd,}\\
1, & \text{if $n$ is even},
\end{cases}
\end{align*}

\noi
which shows that $\{T(x_n, y_n)\}_{n \in \N}$
is not convergent.
This proves
Proposition \ref{PROP:2}\,(ii).

Next, we present a proof of
Proposition \ref{PROP:1}\,(i).
We only show that
$T^{*1}$ is not compact
since non-compactness of
$T^{*2}$ follows from
a similar argument.
It follows from \cite[Proposition~1\,(c7)]{BenTor1}
that if $T^{*1}$ were compact,
then given any bounded sequence
 $\{(z_n^*, y_n)\}_{n\in \N} \subset Z^* \times Y$,
 there would exist a subsequence $\{T^{*1}(z_{n_j}^*, y_{n_j})\}_{j \in \N}$
 that is strongly convergent in $X^*$.
We will show that this property fails.

Define a bounded sequence  $\{(z_n^*, y_n)\}_{n\in \N}\subset B_1^{Z^*}\times B_1^Y$
by setting $z_n^* = 1$  and
$y_n = e_n$, $n \in \N$.
Pick an arbitrary subsequence
$\{(z_{n_j}^*, y_{n_j})\}_{j\in \N}$.
Then, by
the definition of a dual norm and \eqref{trans1}, we have
\begin{align}
\begin{split}
& \|T^{*1}(z_{n_j}^*, y_{n_j})-T^{*1}(z_{n_k}^*, y_{n_k})\|_{X^*}\\
&\quad
=  \sup_{x\in B_1^X}
|\jb{T^{*1}(1, y_{n_j}), x}-\jb{T^{*1}(1, y_{n_k}), x}| \\
&\quad =\sup_{x\in B_1^X}
|\jb{T(x, y_{n_j}), 1}-\jb{T(x, y_{n_k}), 1}| \\
& \quad
\ge 1
\end{split}
\label{T1}
\end{align}

\noi
for any $j >  k \ge 1$,
where the last step follows from choosing
$x =  e_{-n_j}$.
This shows that the subsequence
$\{T^{*1}(z_{n_j}^*, y_{n_j})\}_{j \in \N}$
is not convergent in $X^*$.
Since the choice of the subsequence was
arbitrary,
we conclude that there exists no subsequence
of $\{T^{*1}(z_{n}^*, y_{n})\}_{n \in \N}$
that is strongly convergent in $X^*$
and therefore, $T^{*1}$ is not compact.
This proves
Proposition \ref{PROP:1}\,(i).
\end{example}

We provide another example,
where $Z$ is now infinite-dimensional.

\begin{example} \label{EX:2}\rm
Let $X = Y = L^4(\T)$ and $Z = L^2(\T)$.
Given $s > 0$, define a bilinear operator $T: X \times Y \to Z$ by setting
\begin{align*}
T(x, y)(t) = \jb{\dt}^{-s}(xy)(t).
\end{align*}

\noi
Here,  $\jb{\dt}^{-s} = (1- \dt^2)^{-\frac s2}$ denotes the Bessel potential of order $s > 0$
defined by
\begin{align*}
\jb{\dt}^{-s} f = \sum_{n \in \Z}
\frac1{(1 + 4\pi^2n^2)^\frac s2}
\ft f(n) e_n,
\end{align*}

\noi
where
$e_n(t) = e^{2\pi  int}$ as above and
$\ft f(n)$ denotes the Fourier coefficient of $f$.
Then, by Cauchy-Schwarz's inequality,
we see that $T(x, y) \in H^s(\T)$ for any $x \in X$ and $y \in Y$.
Here, $H^s(\T)$ denotes the standard $L^2$-based Sobolev space.
By the Rellich lemma (see Remark \ref{REM:Rellich} below), the embedding $H^s(\T)\hookrightarrow L^2(\T)$
is compact and hence $T$ is compact.

We first present a proof of  Proposition \ref{PROP:2}\,(ii).
Let
$\{(x_n, y_n)\}_{n\in \N}\subset X\times Y
= L^4(\T) \times L^4(\T)$
by setting
$x_n = e_{n}$
and $y_n = e_{- n + p(n)}$,
where $p(n)$
is as in \eqref{T3}.
Then, we have
\begin{align*}
T(x_n, y_n) =
\begin{cases}
\frac1{(1 + 4\pi^2)^\frac s2}e_1, & \text{if $n$ is odd,}\\
1, & \text{if $n$ is even}.
\end{cases}
\end{align*}

\noi
Namely,  $\{T(x_n, y_n)\}_{n \in \N}$
is not convergent, giving another example
for  Proposition \ref{PROP:2}\,(ii).

Next, we present a proof of
Proposition \ref{PROP:1}\,(i).
Choose a bounded sequence  $\{(z_n^*, y_n)\}_{n\in \N}\subset B_1^{Z^*}\times B_1^Y$
by setting $z_n^* = 1$  and
$y_n = e_n$, $n \in \N$.
Then, the computation in~\eqref{T1} holds
by choosing
$x =  e_{-n_j}$,
(where the duality pairing is re-interpreted accordingly),
which shows that
$T^{*1}$ is not compact.
A similar argument shows that $T^{*2}$
is not compact either.

\end{example}

\begin{remark}\label{REM:Rellich}
\rm

The Rellich lemma 
on the circle (namely, 
the compactness of
 the embedding $H^s(\T)\hookrightarrow L^2(\T)$
for $s >  0$) is well known
(see, for example,  \cite[(3.12) and Proposition 3.4]{Taylor})
and widely used (see, for example,  \cite[Remark~1.2]{Molinet}
and \cite[Section 4]{OS}).
In the following, 
we present an elementary proof  for readers' convenience.
In view of Lemma~\ref{LEM:com}\,(i), 
it suffices to show that any weakly convergent sequence in $H^s(\T)$, $s > 0$, 
is strongly convergent in $L^2(\T)$.

Given $s > 0$, let $\{x_n\}_{n \in \N} \subset H^s(\T)$
be  weakly convergent  in $H^s(\T)$.
Without loss of generality, we assume that $x_n$ 
converges weakly to $0$ in $H^s(\T)$
and that $\sup_{n \in \N}\|x_n\|_{H^s} \le 1$.
Given small $\eps > 0$, 
choose $K = K(\eps)  \in \N$ such that 
\begin{align}
(1 + 4\pi^2K^2)^{-\frac s2}  <\eps.
\label{X1}
\end{align}

\noi
On the other hand, the weak convergence of 
$\{x_n\}_{n \in \N}$ to $0$ in $H^s(\T)$ (and thus in $L^2(\T)$) implies that 
there exists $N = N(\eps) \in \N$ such that 
\begin{align}
|\jb{x_n, e_k}_{L^2}| < (2K+1)^{-1} \eps
\label{X2}
\end{align}

\noi
for any $n \ge N$ and any $k \in \Z$ with $|k| \le K$.
Then, by 
Cauchy-Schwarz's inequality, \eqref{X2}, and~\eqref{X1}
with $\sup_{n \in \N}\|x_n\|_{H^s} \le 1$, 
 we have 
\begin{align*}
\| x_n\|_{L^2} 
& = \sup_{\| \phi\|_{L^2} = 1} |\jb{x_n, \phi}_{L^2}|\\
& \le 
\sum_{|k|\le K} |\jb{x_n, e_k}_{L^2}|
+ 
\sup_{\| \phi\|_{L^2} = 1}
\sum_{|k|> K}(1 + 4\pi^2k^2)^{-\frac s2}
\big((1 + 4\pi^2k^2)^\frac s2
\ft x_n(k)\big) \cj{\ft \phi(k)}\\
& < 2\eps
\end{align*}

\noi
for any $n \ge N$, 
which shows that $\{x_n\}_{n \in \N}$
converges strongly to $0$ in $L^2(\T)$.

\end{remark}

 The next  example provides
a continuous bilinear operator
that is separately compact but is not jointly compact, even in the Hilbert space setting

\begin{example}\label{EX:3} \rm
Let $X = Y = Z = \l^2(\N)$.
Given $n \in \N$,
let $\dl^n$ be the $n$th basis element in $\l^2(\N)$
whose only non-zero entry appears in the $n$th place
and is given by  $1$.
Define a bilinear operator $T:X\times Y \to Z$
by setting
\begin{align*}
T(x, y) =
\sum_{n = 1}^ \infty x_n y_n\dl^n =
(x_1 y_1, x_2y_2, \dots)
\end{align*}

\noi
for $x = \{x_n\}_{n \in \N}$
and $y = \{y_n\}_{n \in \N}$.
By H\"older's inequality and the embedding $\l^2(\N) \subset \l^\infty(\N)$, we
have
\begin{align*}
\|T(x, y)\|_{\l^2} \le \| x\|_{\l^\infty} \| y\|_{\l^2}
\le \| x\|_{\l^2} \| y\|_{\l^2}.
\end{align*}

\noi
Moreover, we have $T(\dl^n, \dl^n) = \dl^n$, $n \in \N$,
and thus
 $T$ is bounded with $\|T\|= 1$.

We first show that $T$ is separately compact.
Given $N \in \N$,
define the projection $\P_N$
by setting
$\P_N x = \sum_{n = 1}^N x_n \dl^n$.
Then,
it follows from the dominated convergence theorem that
\begin{align}
\begin{split}
\| T(x, y)  - \P_N T(x, y) \|_{\l^2}
& = \bigg\|  \sum_{n = N+1}^\infty  x_n y_n \dl^n \bigg\|_{\l^2}
\le \| x\|_{\l^2} \bigg( \sum_{n = N+1}^\infty |y_n|^2 \bigg)^\frac 12 \\
& \too 0,
\end{split}
\label{T4}
\end{align}

\noi
as $N \to \infty$, uniformly
in $x \in B_1^{\l^2}$.
Hence, from \eqref{trans2} and \eqref{T4},
we see that $T_y$ is the limit (in the operator norm topology) of
finite rank operators $(\P_NT)_y$
for each $y \in Y$,
which implies that $T_y$ is compact
for any $y \in Y$.
By symmetry, we deduce that $T_x$ is also compact
for any $x \in X$.
This shows that  $T$ is separately
 compact.

Next, we show that $T$ is not compact.
Noting that $T(\dl^n, \dl^n) = \dl^n$, $n \in \N$,
and
that $\| \dl^n - \dl^m\|_{\l^2} = \sqrt2$
for any $n \ne m$,
we see that
the sequence
$\{(\dl^n, \dl^n) \}_{n \in \N}$
is bounded in $X\times Y = \l^2(\N)\times \l^2(\N)$
but that
$\{T(\dl^n, \dl^n) \}_{n \in \N}$
does not have any convergent subsequence
in $Z = \l^2(\N)$.
In view of \cite[Proposition~1\,(c7)]{BenTor1},
this shows  non-compactness of $T$.

By working on the Fourier side,
the  argument above shows that
for $X = Y = Z = L^2(\T)$,
the operator
 $S$ defined  by
\[ S(x, y)(t) = x*y (t) = \int_\T x(t - s) y(s) ds \]

\noi
 is continuous and
separately compact but is not (jointly) compact.

\end{example}

We conclude this section by
discussing the case of
 bilinear Calder\'on-Zygmund operators.
In  the reflexive case $1 < p, q, r  < \infty$,
 the following proposition
(together with Proposition~\ref{PROP:2}\,(i))
provides positive answers to
 Question \ref{Q:1}\,(i) and (ii).

\begin{proposition}\label{PROP:4}

Let $T: \S(\R^d) \times \S(\R^d) \to \S'(\R^d)$ be a bilinear singular integral operator with a standard Calder\'on-Zygmund kernel.  
Then, the following statements hold
for any
$1 < p, q, r  < \infty$ with
$\frac{1}{p}+\frac{1}{q}=\frac{1}{r}$.

\begin{itemize}
\item[\textup{(i)}]
The operator $T: L^p(\R^d) \times L^q(\R^d)\to L^r(\R^d)$
is compact if and only if
$T^{*1}: L^{r'}(\R^d) \times L^q(\R^d)\to L^{p'}(\R^d)$
is compact if and only if
$T^{*2}: L^{p}(\R^d) \times L^{r'}(\R^d)\to L^{q'}(\R^d)$.

\smallskip

\item[\textup{(ii)}]
Suppose that $T$ is
compact from $L^p(\R^d) \times L^q(\R^d)$ to $L^r(\R^d)$.
Then,
 for every sequence $\{(f_n, g_n)\}_{n\in \N}\subset L^p(\R^d) \times L^q(\R^d)$
with $\{f_n\}_{n\in \N}$ weakly convergent  in $ L^p(\R^d) $ and $\{g_n\}_{n \in \N}$
weakly convergent in $ L^q(\R^d) $, the sequence $\{T(x_n, y_n)\}_{n\in \N}$ is strongly convergent in $ L^r(\R^d) $.

\end{itemize}

\end{proposition}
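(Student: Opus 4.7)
My plan for Part (i) is to invoke Theorem \ref{THM:A}. The two characterizing conditions there---the weak compactness property together with the membership of $T(1,1), T^{*1}(1,1), T^{*2}(1,1)$ in $\CMO$---are invariant under replacing $T$ by $T^{*1}$ or $T^{*2}$: the three $\CMO$ conditions are merely permuted by such transpositions (since $(T^{*1})^{*1} = T$ and $(T^{*1})^{*2}(z^*, x) = T^{*2}(x, z^*)$, and similarly for $T^{*2}$), and the weak compactness property from \cite{FraGreWic} is formulated symmetrically with respect to $T$, $T^{*1}$, $T^{*2}$, hence is likewise invariant under transposition. Since transposes of bilinear standard Calder\'on-Zygmund operators are again such operators, and the H\"older relation $\frac 1p + \frac 1q = \frac 1r$ yields $\frac{1}{r'} + \frac{1}{q} = \frac{1}{p'}$ (with the analogous identity for $T^{*2}$), Theorem \ref{THM:A} applies uniformly to $T$, $T^{*1}: L^{r'} \times L^q \to L^{p'}$, and $T^{*2}: L^p \times L^{r'} \to L^{q'}$ throughout the range $1 < p, q, r < \infty$. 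The three compactness statements are therefore each equivalent to the same set of conditions, hence to each other.

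For Part (ii), my plan is first to identify the expected weak limit of $\{T(f_n, g_n)\}$ in $L^r$ as $T(f, g)$, where $f$ and $g$ denote the weak limits of $f_n$ and $g_n$ (which exist by reflexivity of $L^p$ and $L^q$), and then to upgrade the subsequential strong convergence provided by Proposition \ref{PROP:2}\,(iii) to strong convergence of the full sequence. By Part (i) just established, $T^{*1}: L^{r'}(\R^d) \times L^q(\R^d) \to L^{p'}(\R^d)$ is compact. For each fixed $\phi \in L^{r'}(\R^d)$, the section operator $T^{*1}_\phi = T^{*1}(\phi, \cdot): L^q \to L^{p'}$ is then a compact linear operator, because $T^{*1}_\phi(B_1^{L^q}) \subseteq \|\phi\|_{L^{r'}} \, T^{*1}(B_1^{L^{r'}} \times B_1^{L^q})$, which is precompact in $L^{p'}$. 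Lemma \ref{LEM:com}\,(i), applied to the weakly convergent sequence $g_n \rightharpoonup g$ in $L^q$, then gives $T^{*1}(\phi, g_n) \to T^{*1}(\phi, g)$ strongly in $L^{p'}$. Combining this strong convergence with $f_n \rightharpoonup f$ in $L^p$ and the duality identity \eqref{trans1}, the usual weak-strong convergence principle yields
\[
\jb{T(f_n, g_n), \phi} = \jb{f_n, T^{*1}(\phi, g_n)} \longrightarrow \jb{f, T^{*1}(\phi, g)} = \jb{T(f, g), \phi}
\]
for every $\phi \in L^{r'} = (L^r)^*$; that is, $T(f_n, g_n) \rightharpoonup T(f, g)$ weakly in $L^r$.

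To conclude, I apply Proposition \ref{PROP:2}\,(iii): since $T$ is compact and $\{(f_n, g_n)\}$ is bounded in $L^p \times L^q$, every subsequence of $\{T(f_n, g_n)\}$ admits a further subsequence converging strongly in $L^r$, and every such strong limit must coincide with the already-identified weak limit $T(f, g)$. The standard subsequence-of-subsequence topological argument then forces the full sequence $\{T(f_n, g_n)\}$ to converge strongly to $T(f, g)$ in $L^r$. The main delicate point in this plan is the verification in Part (i) that the weak compactness property used in Theorem \ref{THM:A} is invariant under transposition---this is essentially a routine unpacking of its definition from \cite{FraGreWic}, but is essential for the symmetry argument to close.
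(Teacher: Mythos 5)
Your Part (i) coincides with the paper's own argument: both pass through Theorem \ref{THM:A}, observe that the kernel hypotheses and the weak compactness property are symmetric under transposition, and track how the three $\CMO$ conditions are permuted (the paper phrases this via $(T^{*1})^{*1}=T$ and $(T^{*1})^{*2}=(T_{\text{flip}})^{*1}$, which is exactly your identity $(T^{*1})^{*2}(z^*,x)=T^{*2}(x,z^*)$); like you, the paper takes the symmetry of the weak compactness property as a given rather than unpacking it. Your Part (ii), however, is a genuinely different and somewhat cleaner route. The paper first reduces to the case $f_n\rightharpoonup 0$, $g_n\rightharpoonup 0$ via the trilinear splitting $T(f_n,g_n)-T(f,g)=T(f,g_n-g)+T(f_n-f,g)+T(f_n-f,g_n-g)$ and separate compactness, and then runs an Arzel\`a--Ascoli argument: it restricts the dual unit ball $B_1^{Z^*}$ to the compact set $\overline{T(B_1^X\times B_1^Y)}$, extracts a convergent sequence of near-optimizing functionals $h_j\to h_\infty$, and uses compactness of $T^{*1}$ to kill $T^{*1}(h_\infty,g_{n_{j_k}})$. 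You instead identify the weak limit of $T(f_n,g_n)$ directly via the duality $\jb{T(f_n,g_n),\phi}=\jb{T^{*1}(\phi,g_n),f_n}$, using compactness of the section $T^{*1}_\phi$ (which does follow from Part (i) via the inclusion $T^{*1}_\phi(B_1^{L^q})\subseteq\|\phi\|_{L^{r'}}\,T^{*1}(B_1^{L^{r'}}\times B_1^{L^q})$) together with Lemma \ref{LEM:com}\,(i) and the weak--strong pairing; you then combine the uniqueness of this weak limit with the subsequential strong convergence from Proposition \ref{PROP:2}\,(iii) and the standard subsequence principle. Both arguments are correct and both rely on Part (i) to obtain compactness of $T^{*1}$; yours avoids the equicontinuity/Arzel\`a--Ascoli machinery and the $\eps$-dependent choice of functionals, at the mild cost of invoking reflexivity of $L^r$ to work with weak limits in the target, which is harmless in the stated range $1<r<\infty$.
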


\begin{proof}
(i)
We only prove that compactness of $T$
implies compactness of $T^{*1}$ and $T^{*2}$.
We first note that the hypotheses
on
  the kernels  and
the weak compactness property
 in  Theorem~\ref{THM:A} are symmetric 
for $T$, $T^{*1}$, and $T^{*2}$.
Moreover,
by noting that\footnote{Here,
we used the reflexivity of   $L^p(\R^d)$, $1 < p < \infty$.}
$(T^{*1})^{*1} = T $ and $(T^{*1})^{*2} = (T_{\text{flip}})^{*1}$,
where $T_{\text{flip}}(f,g) = T(g,f)$,  and
that
if $T$
 is compact from
 $L^p(\R^d) \times L^q(\R^d)$ to $L^r(\R^d)$,
 then  $T_\text{flip}$ is compact from
 $L^q(\R^d) \times L^p(\R^d)$ to $L^r(\R^d)$,
 it follows from Theorem~\ref{THM:A}
 that $T^{*1}(1, 1)$,
$(T^{*1})^{*1}(1, 1)  = T(1, 1) $,  and $(T^{*1})^{*2} (1, 1)= (T_{\text{flip}})^{*1}(1, 1)$
are all in $\CMO$\,.
Hence, by applying Theorem \ref{THM:A}
in the reversed direction, we conclude that $T^{*1}$
is compact from
 $L^{r'}(\R^d) \times L^q(\R^d)$ into $L^{p'}(\R^d)$.
A similar argument shows that
$T^{*2}$
is compact from
 $L^p(\R^d) \times L^{r'}(\R^d)$ into $L^{q'}(\R^d)$.

 \smallskip

\noi
(ii)
Fix a sequence $\{(f_n, g_n)\}_{n\in \N}\subset L^p(\R^d) \times L^q(\R^d)$
such that  $\{f_n\}_{n\in \N}$ converges weakly  to some $f$ in $ L^p(\R^d) $ and $\{g_n\}_{n \in \N}$
converges weakly  to some $g$ in $ L^q(\R^d) $.
Our goal is to show that  the sequence $\{T(x_n, y_n)\}_{n\in \N}$ is strongly convergent in $ L^r(\R^d) $.

In view of the bilinearity of $T$, we have
\begin{align}
T(f_n, g_n)-T(f, g)=T(f, g_n-g)+T(f_n-f, g)+T(f_n-f, g_n-g).
\label{A1}
\end{align}

\noi
Since $T$ is separately compact,
the first two terms on the right-hand side of \eqref{A1}
converge to $0$ in $L^r(\R^d)$ as $n \to \infty$.
Therefore,
it suffices to prove that if $f_n$ converges weakly to $0$ in $L^p(\R^d)$ and $g_n$ converges weakly to $0$ in $L^q(\R^d)$, then $T(f_n ,g_n)$ converges to 0
in $L^r(\R^d)$.

Fix a subsequence $\{T(f_{n_j}, g_{n_j})\}_{j \in \N}$.
We show that it has a further subsequence that converges to $0$ in $L^r(\R^d)$.
For simplicity of notations, set $X = L^p(\R^d)$,
$Y = L^q(\R^d)$, and $Z = L^r(\R^d)$.
Without loss of generality,
we assume that $f_n \in B_1^{X}$ and $g_n \in B_1^{Y}$
for any $n \in \N$.
Note that the closed unit ball $B_1^{Z^*}$ is equicontinuous as a collection
of continuous linear functionals on $Z$. Indeed, for any $h, h'\in Z$ and $h^*\in Z^*$ with $\|h^*\|_{Z^*}\leq 1$, we have
\[
|\jb{h^*, h}-\jb{h^*,  h'}|\leq \|h^*\|_{Z^*}\|h- h'\|_{Z}\leq \|h- h'\|_{Z}.
\]

\noi
Hence, the restriction of $B_1^{Z^*}$ to
a compact set $E:=\overline{T(B_1^X\times B_1^Y)}$,
denoted by  $(B_1^{Z^*})|_E$, is a pointwise-bounded, equicontinuous collection of functions on
a compact set $E$.
By the Arzel\`a-Ascoli theorem, we obtain that
$(B_1^{Z^*})|_E$ is a precompact subset of the space of continuous linear functionals on $E$.

Fix small $\eps > 0$.
Then, for each  $j \in \N$, there exists $h_j \in (B_1^{Z^*})|_E$
such that
\begin{align}
\begin{split}
\|T(f_{n_j} ,g_{n_j})\|_{L^r}
& = \sup_{h\in B_1^{Z^*}} |\jb{ T(f_{n_j} ,g_{n_j}), h}|
 \le |\jb{ T(f_{n_j} ,g_{n_j}), h_j}| + \eps\\
& \le \| T^{*1}(h_j ,g_{n_j})\|_{X^*} + \eps.
\end{split}
\label{A2}
\end{align}

\noi
By the precompactness of $(B_1^{Z^*})|_E$,
we can extract a  subsequence $\{h_{j_k}\}_{k\in \N}\subset(B_1^{Z^*})|_E$
converging to $h_\infty$.
Thus, there exists  $N_1 = N_1(\eps) \in \N$ such that
\begin{align}
\| T^{*1}(h_{j_k} ,g_{n_{j_k}})
- T^{*1}(h_\infty ,g_{n_{j_k}})
\|_{X^*}
\les \| T^{*1}\|\, \|h_{j_k} - h_\infty\|_{Z^*}
< \eps
\label{A3}
\end{align}

\noi
for any $k \ge N_1$.
Lastly, from the compactness of $T$ and Part (i)
of this proposition,
we see that $T^{*1}$ is compact
and thus is separately compact.
Since $\{g_{n_{j_k}}\}_{k \in \N}$
converges weakly to 0 in $Y$,
it follows from Lemma \ref{LEM:com}\,(i) that
 $T^{*1}(h_\infty ,g_{n_{j_k}})$
converges strongly to $0$ in $X^*$
as $k \to \infty$.
In particular, there exists $N_2 = N_2(\eps) \in \N$
such that
\begin{align}
\|T^{*1}(h_\infty ,g_{n_{j_k}})
\|_{X^*} < \eps
\label{A4}
\end{align}

\noi
for any $k \ge N_2$.

Therefore, putting \eqref{A2}, \eqref{A3}, and \eqref{A4}
together,
we conclude that
\begin{align*}
\|T(f_{n_{j_k}} ,g_{n_{j_k}})\|_{L^r} < 3\eps
\end{align*}

\noi
for any $k \ge \max(N_1, N_2)$.
Since the choice of $\eps$ was arbitrary,
we then conclude that
the  subsubsequence $\{T(f_{n_{j_k}}, g_{n_{j_k}})\}_{k \in \N}$
converges strongly to $0$ in $ L^r(\R^d)$.
This shows that
the original sequence $T(f_n ,g_n)$ converges to 0
in $L^r(\R^d)$
as $n \to \infty$.
\end{proof}

%

\section{Bilinear paraproducts with \CMO symbols}
\label{SEC:3}

We first recall the definition of $\BMO\, (\R^d)$, the space of functions of bounded mean oscillation. Given a locally integrable function $f$ on $\R^d$, its $\BMO$\,-seminorm is given by
\[ \|f\|_{\BMO} = \sup_{Q} \frac{1}{|Q|} \int_Q|f(x) - f_Q|dx,\]
	
\noi
where the supremum is taken over all cubes $Q \subset \R^d$
and $f_Q$ stands for the mean of $f$ over $Q$, namely
\[f_Q  = \frac{1}{|Q|} \int_Q f(x) dx.\]

\noi
We say that $f$ is of bounded mean oscillation if $\|f\|_{\BMO}< \infty$, and denote
\[ \BMO\,(\R^d)  =
\big\{ f \in L^1_{\text{loc}}(\R^d):\,
\| f \|_{\BMO} < \infty \big\}.\]


 \noi
 As usual, 
 we view this space as a space of equivalent classes
 of functions modulo additive constants.
The closure of $C_c^\infty (\R^d)$ in the $\BMO$ topology is called the space of functions of continuous mean oscillation, and it is denoted by $\CMO \, (\R^d)$.
In the following,  we suppress the underlying space $\R^d$  from our notation.

Let $\varphi, \psi\in C_c^\infty$ be radial functions such that
$\supp (\varphi)\subset B_1$,
$\widehat\psi (0)=0$, and
\begin{align}
\int_0^\infty |\ft \psi(te_1)|^2\frac{dt}{t}=1,
\label{P0}
\end{align}

\noi	
 where $e_1=(1, 0,..., 0)\in\R^d$. For $t\in\R_{+}$,
we also define  the linear convolution operators $P_t$ and $Q_t$
by $P_t f=\varphi_t *f $ and $Q_t f=\psi_t*f$, where $h_t=t^{-d}h(t^{-1}\,\cdot\,)$
for a function $h$ on $\R^d$.
Then,  the Calder\'on reproducing formula \cite{Cal}
states the following
\begin{align}
\int_0^\infty Q_t^2 f \frac{dt}{t}=f
\label{P0a}
\end{align}

\noi
 in $L^2$,
where  $Q_t^2 f=Q_t(Q_t f)=\psi_t*\psi_t*f$;
see also~\cite{Wil}.

Given $b\in\BMO$,
we now define
a \emph{bilinear paraproduct} $\Pi_b$ by\footnote{Hereafter,  as it is customary,
we avoid a detailed explanation on the sense in which the integrals based on Calder\'on's formula converge to the represented objects. The interested reader  can consult \cite{Wil, Har}  for precise explanations and \cite{BenTor3} for further references.}
\begin{align}
\Pi_b (f, g) =\int_0^\infty Q_t\big((Q_t b)(P_t f)(P_t g)\big) \frac{dt}{t}.
\label{P1}
\end{align}

\noi
We have the following compactness result
on the bilinear paraproduct $\Pi_b$;
 see also \cite[Proposition~5.2]{FraGreWic}.

\begin{proposition}
\label{PROP:para}
Let $1<p, q<\infty$ and $\frac{1}{2}<r<\infty$ be such that $\frac{1}{p}+\frac{1}{q}=\frac{1}{r}.$ If $b\in\CMO$, then
$\Pi_b$ defined in \eqref{P1}  is a compact bilinear Calder\'on-Zygmund operator
from $L^p\times L^q$ into  $L^r$,
satisfying
\begin{align}
\Pi_b (1, 1)=b
\qquad \text{and}\qquad
 \Pi_b^{*j}(1, 1)=0,\quad  j=1, 2.
\label{para1}
\end{align}

\end{proposition}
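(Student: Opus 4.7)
The plan breaks into three tasks: verifying the algebraic identities \eqref{para1} together with the Calder\'on--Zygmund kernel bounds (the boundedness component is supplied, for $b\in\BMO$, by the argument of \cite[Lemma 5.1]{Har}); establishing compactness at a single convenient reflexive triple via Proposition~\ref{PROP:2}\,(i); and extending to the full stated range by bilinear interpolation of compactness.

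I would start with \eqref{para1}. With the implicit normalization $\widehat\varphi(0)=1$, one has $P_t(1)\equiv 1$, while $\widehat\psi(0)=0$ gives $Q_t(1)\equiv 0$. Substituting $(f,g)=(1,1)$ into \eqref{P1} collapses the integrand to $Q_t^2 b$, so Calder\'on's reproducing formula \eqref{P0a} yields $\Pi_b(1,1)=b$. The transpose identities follow from duality and Fubini: for $j=1$,
\begin{align*}
\langle \Pi_b^{*1}(1,1),f\rangle
=\int_{\R^d}\Pi_b(f,1)\,dx
=\int_0^\infty\int_{\R^d}Q_t\big((Q_tb)(P_tf)\big)\,dx\,\frac{dt}{t},
\end{align*}
and each inner integral vanishes since $\int Q_t(h)\,dx=\widehat\psi(0)\widehat{h}(0)=0$; the case $j=2$ is identical. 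That $\Pi_b$ carries a standard bilinear CZ kernel is a routine size/smoothness computation from $\supp\varphi\subset B_1$ and the cancellation of $\psi$.

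For compactness I would fix the reflexive triple $(p_0,q_0,r_0)=(4,4,2)$, where all domain and target spaces are reflexive Banach, and verify the weak-to-strong implication of Proposition~\ref{PROP:2}\,(i). By bilinearity and Remark~\ref{REM:x}, it is enough to check that if $f_n\rightharpoonup 0$ in $L^4$ and $\{g_n\}$ is bounded in $L^4$ (and symmetrically), then $\Pi_b(f_n,g_n)\to 0$ in $L^2$. The key input is the characterization that $b\in\CMO$ if and only if $d\mu_b(x,t):=|Q_tb(x)|^2\,\frac{dx\,dt}{t}$ is a \emph{vanishing} Carleson measure, i.e.\ its Carleson mass over cubes whose side length approaches $0$ or $\infty$, or whose center tends to spatial infinity, tends to $0$. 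Dualizing against $h\in L^2$, applying Cauchy--Schwarz in $(x,t)$, and using $\|S(h)\|_{L^2}\lesssim\|h\|_{L^2}$ for the Littlewood--Paley square function reduces matters to
\begin{align*}
I_n:=\iint_{\R^d\times(0,\infty)}|P_tf_n|^2|P_tg_n|^2\,d\mu_b\longrightarrow 0.
\end{align*}
Given $\varepsilon>0$, vanishing Carleson yields $0<\delta<R<\infty$ and $M>0$ such that the restriction of $\mu_b$ to the complement of the slab $K=\{(x,t):\delta\le t\le R,\,|x|\le M\}$ has Carleson norm at most $\varepsilon$. On that complement, Carleson's embedding and $|P_tf|\le Mf$ pointwise bound the contribution by $C\varepsilon\|f_n\|_{L^4}^2\|g_n\|_{L^4}^2$. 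On $K$, weak convergence $f_n\rightharpoonup 0$ in $L^4$ against the $L^{4/3}$-compact family $\{\varphi_t(x-\cdot):(x,t)\in K\}$ forces $P_tf_n\to 0$ uniformly on $K$, so that portion of $I_n$ vanishes by dominated convergence. This yields compactness of $\Pi_b:L^4\times L^4\to L^2$.

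To reach the full range $1<p,q<\infty$, $\tfrac12<r<\infty$ with $\tfrac1p+\tfrac1q=\tfrac1r$, I would invoke the bilinear compactness interpolation theorem of Cobos--Fern\'andez-Cabrera--Mart\'inez \cite{CobFerMar}, interpolating the compactness just proven at $(4,4,2)$ against the boundedness of $\Pi_b$ at any other admissible triple supplied by \cite[Lemma 5.1]{Har}. This interpolation step is precisely what allows one to reach the quasi-Banach range $\tfrac12<r\le 1$, to which Proposition~\ref{PROP:2}\,(i) does not apply directly. The step I expect to be most delicate is the vanishing-Carleson splitting above: one must verify that the truncation parameters $\delta$, $R$, $M$ depend only on $\varepsilon$ and $b$ (not on $n$), and that the uniform convergence of $P_tf_n$ on the compact slab $K$ is genuinely quantitative, so that both halves of the splitting produce smallness as $n\to\infty$.
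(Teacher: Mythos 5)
Your proposal is correct and follows essentially the same route as the paper: the identities \eqref{para1} and boundedness come from \cite[Lemma 5.1]{Har}, compactness is first established at a reflexive triple with target $L^2$ via Proposition \ref{PROP:2}\,(i) (with Remark \ref{REM:x}) and the vanishing Carleson measure $|Q_t b(x)|^2\,dx\,\frac{dt}{t}$, and the full range (including $\tfrac12<r\le 1$) is reached by the bilinear compactness interpolation of \cite[Theorem 5.2]{CobFerMar}. The only real difference is that where you prove the compactness of $P_t:L^p(\R^d)\to L^p(\R^{d+1}_+;d\mu)$ by hand (slab splitting of the vanishing Carleson measure plus uniform convergence of $P_t f_n$ on compact slabs), the paper simply invokes \cite[Theorem 2.1]{DinMei}.
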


\begin{proof}
Fix $b \in \CMO$.
Since $b \in \BMO$, it follows from
 \cite[Lemma 5.1]{Har}
 that $\Pi_b$ is a bilinear Calder\'on-Zygmund operator,
 satisfying \eqref{para1},
that is bounded
 from $L^p\times L^q$ into $L^r$ for any  $1<p, q<\infty$ and $\frac 12<r<\infty$
 such that  $\frac 1p+\frac 1q=\frac 1r$.
In the following,
we show that under the stronger assumption $b\in\CMO$,
the bilinear paraproduct   $\Pi_b$ is indeed a compact bilinear operator from $L^p\times L^q$ into $L^r$.

Fix  $2< p, q<\infty$  such that $\frac 1p+\frac 1q=\frac 12$.
We first  show that $\Pi_b$ is compact from $L^p\times L^q$ into $L^2$.
Let $\{(f_n, g_n)\}_{n\in\mathbb N}\subset L^p\times L^q$ such that $f_n$ converges weakly  in $L^p$
and $g_n$ converges weakly  in $L^q$.
Moreover, we assume that either $f_n$ converges weakly to $0$
or $g_n$ converges weakly to $0$
as $n \to \infty$.
Then, our goal is to show
 that $\|\Pi_b (f_n ,g_n)\|_{L^2}$ converges to $0$
 as $n \to \infty$.

We first
note that, since $b\in \CMO$, the non-negative measure $\mu$ defined by
\begin{align}
d\mu(x, t)=|Q_t b(x)|^2dx\frac{dt}{t}
\label{P2}
\end{align}

\noi
is a vanishing Carleson measure on $\R^{d+1} = \R^d \times \R_+$; see \cite[Definition 1.3 and Remark 3.2]{DinMei}.

 Let $h\in L^2$ with $\|h\|_{L^2}\leq 1$.
 By using H\"older's inequality (in $t$),  the  square function estimate:\footnote{In fact, in the current $L^2$ setting,
by using  \eqref{P0a},
the first inequality in \eqref{P3}
is indeed an equality.
One may also prove this fact via
Plancherel's identity and
 the normalizing condition \eqref{P0}
 with the radiality of $\psi$; see \cite[p.\,27]{Stein}.
For the general $L^p$ setting, $1 < p < \infty$,
see  \cite[Subsection I.8.3]{Stein}.}  
\begin{align}
\bigg\|\bigg(\int_0^\infty|Q_t h|^2\frac{dt}{t}\bigg)^{\frac 12}\bigg\|_{L^2}
\les \|h\|_{L^{2}}
\le 1,
\label{P3}
\end{align}
and \eqref{P2},
we obtain
\begin{align}
\begin{split}
 | & \jb{\Pi_b(f_n ,g_n), h}|\\
& \le
\int_0^\infty
\int_{\R^d} \big|\big(Q_t b(x) P_t f_n(x) P_t g_n(x) \big) Q_t h(x)\big| dx \frac{dt}{t}\\
&
\leq \bigg(\int_0^\infty\int_{\R^d}|P_t f_n(x)|^2|P_t g_n(x)|^2|Q_t b(x)|^2 d x \frac{dt }{t}\bigg)^{\frac{1}{2}}
\bigg\|\bigg(\int_0^\infty|Q_t h|^2 \frac{dt}{t}\bigg)^{\frac{1}{2}}\bigg\|_{L^2} \\
&
\lesssim \bigg(\int_0^{\infty} \int_{\R^d}|P_t f_n(x)|^p|Q_t b(x)|^2 d x \frac{d t}{t}\bigg)^{\frac{1}{p}}
\bigg(\int_0^{\infty} \int_{\R^d}|P_t g_n (x)|^q|Q_t b(x)|^2 d x \frac{d t}{t}\bigg)^{\frac{1}{q}}\\
&
=  \|P_t f_n(x)\|_{L^p(\mathbb R_+^{d+1}, d\mu)}\|P_t g_n(x)\|_{L^q(\mathbb R_+^{d+1}, d\mu)},
\end{split}
\label{P4}
\end{align}

\noi
uniformly in $h \in L^2$ with $\|h\|_{L^2} \le 1$.
Since $d\mu$ is a vanishing Carleson measure,
it follows from
\cite[Theorem 2.1]{DinMei} that
the convolution operator $P_t$ is compact from $L^p(\R^d)$
to $L^p(\R^{d+1}_+; d\mu)$
for $1 < p < \infty$.
In view of the weak convergence of $f$ or $g$ to $0$, we
then have
\begin{align}
\|P_t f_n(x)\|_{L^p(\mathbb R_+^{d+1}, d\mu)}\too 0
\qquad \text{or}\qquad \|P_t g_n(x)\|_{L^q(\mathbb R_+^{d+1}, d\mu)}\too 0,
\label{P5}
\end{align}

\noi
as $n \to \infty$.
From \eqref{P4} and \eqref{P5},
we see  that
 $\Pi_b (f_n ,g_n)$ converges  strongly to $0$ in $L^2$.
Hence, from  Proposition \ref{PROP:2}\,(i)
and Remark \ref{REM:x},
 we conclude that the bilinear paraproduct $\Pi_b$ is compact from $L^p\times L^q$ to $L^2$ with $1<p, q<\infty$ satisfying $\frac 1p+\frac 1q=\frac 12$.

Finally,  recalling that  $\Pi_b$ is also bounded from $L^p\times L^q$ to $L^r$ for all $1<p, q<\infty$ and
$\frac 12<r<\infty$ with $\frac 1p+\frac 1q=\frac 1r$,
we conclude  from interpolation of bilinear compactness \cite[Theorem~5.2]{CobFerMar}
(see also the proof  of \cite[Theorem~6.1]{CobFerMar})  that $\Pi_b$ is in fact compact from $L^p\times L^q$ to $L^r$ for all $1<p, q<\infty$ and
$\frac 12<r<\infty$ with $\frac 1p+\frac 1q=\frac 1r$.
\end{proof}

\begin{remark}\rm

In the proof of Proposition \ref{PROP:para},
we needed to assume $p, q < \infty$
in applying
\cite[Theorem~2.1]{DinMei}
on the compactness of
 $P_t$  from $L^p(\R^d)$
to $L^p(\R^{d+1}_+; d\mu)$
and
\cite[Theorem~5.2]{CobFerMar} on
interpolation of bilinear compactness.
Compare this with the situation in \cite{FraGreWic},
where the upper endpoint ($p = \infty$ or $q = \infty$)
is allowed; see \cite[Remark 3.5]{FraGreWic}.

\end{remark}

%
%

\begin{remark} \label{REM:lin} \rm

 In the linear case, the compact $T(1)$ theorem
 in \cite[Theorem 1.1]{MitSto}
 provides an $L^2$-characterization of compact  linear Calder\'on-Zygmund operators.
 By noting that
 a Calder\'on-Zygmund operator is $L^p$-bounded for all $1<p<\infty$,
we see  from  Krasnosel'ski\u{i}'s interpolation result \cite{Kra}
that  the compact $T(1)$ theorem in \cite{MitSto}
is in fact a characterization of $L^p$-compactness for all $1<p<\infty$; see also
\cite[Remark 2.22]{V}.
See
\cite{CobFer} for a discussion of interpolation results for compact linear operators between more general Banach spaces.

\end{remark}

\begin{ackno}\rm

\'A.B.  acknowledges the support from
 an  AMS-Simons Research Enhancement Grant for PUI Faculty.
G.L. and T.O.~were supported by the European Research Council (grant no.~864138 ``SingStochDispDyn").
The first three authors would like to thank
the West University of Timi\c{s}oara
for its
hospitality, where part of this paper was prepared.
The authors would like to thank the anonymous referees for the helpful comments.

\end{ackno}

\end{document}